\documentclass[11pt]{article}

\usepackage[T2A]{fontenc}
\usepackage[utf8]{inputenc}
\usepackage[english,russian]{babel}
\usepackage[tbtags]{amsmath}
\usepackage{amsfonts,amssymb}

\oddsidemargin -0.4 mm
\evensidemargin -0.4 mm
\textwidth 160 mm
\topmargin -2.5 mm
\headheight 0 mm
\headsep 0 mm
\textheight 240 mm

\sloppy

\hyphenpenalty=1000
\usepackage{indentfirst}

\usepackage{mathrsfs}

\usepackage{graphicx}
\usepackage{url}





\usepackage{pifont}

\newcommand{\circledOne}{\text{\ding{172}}}
\newcommand{\circledTwo}{\text{\ding{173}}}
\newcommand{\circledThree}{\text{\ding{174}}}

\newtheorem{theorem}{Теорема}
\newtheorem{lemma}{Лемма}
\newtheorem{proof}{Доказательство}

%


\newcommand{\RR}{\mathbb{R}}
\newcommand{\NN}{\mathbb{N}}
\newcommand{\PP}{\mathbb{P}}
\newcommand{\E}{\mathbb{E}}

\newenvironment{keywords} 
{
\list{}{\itemindent\listparindent\leftmargin 2\parindent
\rightmargin\leftmargin}%
\leftmargini 2em
\small\item[]\relax\parindent12pt\leftmarginii3em\hskip\parindent{\bf Ключевые слова:}}
{\endlist\vskip12pt plus 3pt minus 2pt}

\begin{document}

\def\udk#1{\def\@udk{#1}}
\udk{519.21}

\date{}

\author{Э.А.~ГОРБУНОВ (\textrm{ed-gorbunov@yandex.ru}),
	\\
	(Московский физико-технический институт, Москва),
	\\
Е.А.~ВОРОНЦОВА (\textrm{vorontsovaea@gmail.com}), \\
	(Дальневосточный федеральный университет, Владивосток),
	\\
	А.В.~ГАСНИКОВ (\textrm{gasnikov@yandex.ru}) , \\
	(Московский физико-технический институт, Москва)
	 }


\title{О верхней оценке математического ожидания нормы равномерно распределённого на сфере вектора и явлении концентрации
равномерной меры на сфере
\footnote{Работа А.В. Гасникова и Э.А. Горбунова поддержана грантом РФФИ 18-31-20005 мол\_а\_вед.
Работа Е.А. Воронцовой поддержана грантом РФФИ 18-29-03071.}}

\markboth{Э.\,А.~Горбунов, Е.\,А.~Воронцова, А.\,В.~Гасников}{О верхней оценке математического ожидания нормы равномерно распределённого на сфере вектора и явлении концентрации
равномерной меры на сфере}

\maketitle

\begin{abstract}
Рассматривается задача построения верхних оценок математического ожидания нормы равномерно распределённого 
на единичной евклидовой сфере вектора.  

Библиография: 15 названий.

\end{abstract}

\begin{keywords}
концентрация меры, равномерно распределённый на сфере вектор
\end{keywords}

\section{Введение}\label{sec1}
Пусть $e \in RS_2^n \left( 1 \right)$~--- случайный вектор, имеющий равномерное распределение на $n$-мерной единичной евклидовой сфере. 
	
В настоящей работе рассматривается задача построения уточнённых верхних оценок математического ожидания нормы вектора~$e$.
От точности построения таких оценок
зависят, например, оценки скорости сходимости
ускоренного метода  (Accelerated by Coupling Directional
Search~-- ACDS), построенного на базе специального каплинга спусков по направлению в форме градиентного спуска
и метода зеркального спуска~\cite{ACDS_VGG}.

Первые такие оценки были получены в 
2014 году в~\cite{Arxiv_GLUF},
статья опубликована в 2016 году в~\cite{AiT16_GLUF}.
Кроме того, независимо
от работы~\cite{AiT16_GLUF}
в 2015~г. похожие оценки были сделаны в~\cite{Shamir_arx},
опубликовано в 2017~г. в~\cite{Shamir_17};
и, также независимо, в 2015~г. одна
из возможных оценок математического ожидания нормы равномерно распределённого 
на единичной евклидовой сфере вектора была получена в~\cite{Duchi_15}.

Основным результатом данной работы является теорема~\ref{main_theorem}.

\section{Постановка задачи и формулировка результата}\label{sec2}	
	Пусть задан некоторый (неслучайный) вектор $s$ с единичной евклидовой сферы. Не умаляя общности, мы будем считать, что вектор $s$ направлен вдоль первой координатной оси (если это не так, то мы можем перейти к нужному базису). Тогда с вероятностью хотя бы $1 - \frac{2}{c}e^{-\frac{c^2}{2}}$ будет выполнено неравенство $|\langle s,\, e \rangle| \leqslant \frac{c}{\sqrt{n-1}}$ (см. теорему 2.7 и рисунок 2.2 из \cite{Blum_2016} и \cite{Ball_1997}). То есть, если взять $c=10$, то получим, что с большой вероятностью выполнено неравенство $\langle s,\, e \rangle^2\leqslant \frac{100}{n}$ (множество, на котором $\langle s,\, e \rangle^2\leqslant \frac{100}{n}$, обозначим через $A_s$; как мы видим, при достаточно больших $n$ вероятностная мера множества $A_s$ велика). Кроме того, можно показать, что $\E[\langle s,\, e \rangle^2] = \frac{1}{n}$ (см., например, лемму B.10 из \cite{bogolubsky2016learning}).
	
	Рассмотрим $\infty$-норму, которая для произвольного вектора $x \in \RR^n$ задаётся формулой $\|x\|_\infty = \underset{1\leqslant i\leqslant n}{\max}|x_i|$, где $x = (x_1,\,x_2,\,\ldots,\, x_n)^\top$. Заметим, что функция $f(e) = \|e\|_\infty$ является липшицевой с константой $1$ в евклидовой норме. Рассмотрим константу $M_f$ такую, что $\PP_{e}\left\{f(e) \geqslant M_f\right\} \geqslant \frac{1}{2}$ и $\PP_{e}\left\{f(e) \leqslant M_f\right\} \geqslant \frac{1}{2}$. Тогда верно неравенство (см. \cite{Boucheron_2013}, \cite{Milman_1986})
    $$
    \PP_e\left\{|f(e) - M_f| > t\right\} \leqslant 4e^{-\frac{t^2}{4}},\quad t > 0.
    $$
    Это означает, что случайная величина $\|e\|_\infty$ принимает очень близкие к $\E[\|e\|_\infty]$ ($M_f$ и $\E[f(e)]$ асимптотически близки, см. \cite{Zorich_2008}) значения на множестве достаточно большой меры. Кроме того, можно показать, что максимальная по модулю компонента вектора $e$ с вероятностью не меньше $1 - \frac{1}{n\sqrt{n}}$ принимает значения по модулю меньшие $\frac{2\sqrt{\ln n}}{\sqrt{n-1}}$ (множество, на котором $\|e\|_\infty \leqslant \frac{2\sqrt{\ln n}}{\sqrt{n-1}}$, обозначим через $B_\infty$). Тогда $\E[\langle s,\, e \rangle^2\|e\|_\infty^2]$ близко к среднему значению случайной величины $\langle s,\, e \rangle^2\|e\|_\infty^2$ на множестве $A_e \cap B_\infty$ (чья вероятностная мера по-прежнему велика), на котором она не превосходит ${400\ln n}/{n^2}$. Константа в этой оценке сильно завышена и она уточняется далее в Теореме~1 (причём не только для $\infty$-нормы). Однако такого рода рассуждения, вытекающие из явления концентрации равномерной меры на сфере, поясняют причины возникновения такой оценки, а также её целесообразность в терминах вхождения размерности пространства $n$.
	
	Сформулируем и докажем достаточно известный факт, заключающийся в том, что векторная $q$-норма является невозрастающей функцией от $q$ для любого фиксированного вектора.
	\begin{lemma}
	Для любого вектора $x\in\RR^n$ (и для любого $n \in \NN$) выполнено неравенство:
	\begin{equation}\label{eq:p_norm_estimation}
	  \|x\|_{p_1} \leqslant \|x\|_{p_2},  
	\end{equation}
	где $p_1 \geqslant p_2$ и под знаком $\|\cdot\|_q$ понимается векторная~$q$-норма (норма Гёльдера с показателем~$q$).
	\end{lemma}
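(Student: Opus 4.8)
The plan is to prove \eqref{eq:p_norm_estimation} by a normalization (homogeneity) argument, reducing to the case where the smaller-exponent norm equals~$1$. First I would dispose of the trivial case $x = 0$, for which both sides vanish and the inequality holds with equality. For $x \neq 0$, since every $q$-norm is positively homogeneous of degree one, I would replace $x$ by the rescaled vector $y = x / \|x\|_{p_2}$, so that $\|y\|_{p_2} = 1$; the desired inequality $\|x\|_{p_1} \leqslant \|x\|_{p_2}$ is then equivalent to $\|y\|_{p_1} \leqslant 1$.

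The key observation is an elementary componentwise bound. From $\|y\|_{p_2} = 1$ we obtain $\sum_{i=1}^n |y_i|^{p_2} = 1$, which forces $|y_i| \leqslant 1$ for every index~$i$. For a real number $t \in [0,1]$ and exponents $p_1 \geqslant p_2 > 0$ one has $t^{p_1} \leqslant t^{p_2}$, since $t^{p_1} = t^{p_2}\cdot t^{p_1 - p_2}$ and $t^{p_1-p_2}\leqslant 1$. Applying this with $t = |y_i|$ and summing over~$i$ yields $\sum_{i=1}^n |y_i|^{p_1} \leqslant \sum_{i=1}^n |y_i|^{p_2} = 1$, and taking the $p_1$-th root gives $\|y\|_{p_1} \leqslant 1$, as required.

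Finally I would address the boundary case $p_1 = \infty$ separately, since the defining formula $\|x\|_q = \left(\sum_{i=1}^n |x_i|^q\right)^{1/q}$ degenerates there. In that case $\|x\|_\infty = \max_{1\leqslant i\leqslant n} |x_i| = |x_j|$ for some index~$j$, and isolating the $j$-th summand gives $\|x\|_{p_2} = \left(\sum_{i=1}^n |x_i|^{p_2}\right)^{1/p_2} \geqslant \left(|x_j|^{p_2}\right)^{1/p_2} = |x_j| = \|x\|_\infty$, so the claim holds here too.

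The argument is essentially routine, and I do not expect a genuine obstacle. The only points requiring a little care are the elementary inequality $t^{p_1} \leqslant t^{p_2}$ on $[0,1]$ and the bookkeeping of the edge cases $x = 0$ and $p_1 = \infty$, so that the normalization step is well defined throughout.
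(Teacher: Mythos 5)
Your proof is correct, but it takes a genuinely different route from the paper. The paper argues via calculus: it sets $g_x(p) = \ln\|x\|_p = \frac{1}{p}\ln\bigl(\sum_k |x_k|^p\bigr)$, computes $\frac{dg_x}{dp}$, and shows this derivative is nonpositive using Jensen's inequality for the concave function $\ln$, concluding that $p \mapsto \|x\|_p$ is non-increasing. You instead use the classical normalization argument: rescale to $\|y\|_{p_2}=1$, observe that then every $|y_i|\leqslant 1$, apply the elementary pointwise bound $t^{p_1}\leqslant t^{p_2}$ on $[0,1]$, and sum. Your approach is more elementary --- it needs no differentiation under the sum, no Jensen, and no reduction to vectors with nonzero components (which the paper performs to make the logarithms well defined) --- and it handles the endpoint $p_1=\infty$ cleanly as a separate one-line case, whereas the paper's derivative computation formally lives on $[1,+\infty)$ and reaches $q=\infty$ only by a limiting convention. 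What the paper's approach buys is a slightly stronger qualitative statement (monotonicity of the whole function $p\mapsto\|x\|_p$ exhibited through the sign of its derivative), which fits the analytic style of the rest of the proof of Theorem~1; for the inequality as stated, your argument is shorter and at least as rigorous.
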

	\begin{proof}
	Не умаляя общности, будем считать, что все компоненты вектора $x$ являются ненулевыми (если вектор $x$ ненулевой, то его норма равна норме его подвектора меньшей размерности, полученного удалением нулевых компонент, и, соответственно, можно рассматривать этот подвектор; если же вектор $x = 0$, то неравенство \eqref{eq:p_norm_estimation} тоже верно). Пусть $g_x(p) \overset{\text{def}}{=} \ln \|x\|_p = \ln\left(\sum\limits_{k=1}^{n}|x_k|^p\right)^{\frac{1}{p}} = \frac{1}{p}\ln\left(\sum\limits_{k=1}^{n}|x_k|^p\right)$. Тогда 
$$
\frac{dg_x(p)}{dp} = -\frac{1}{p^2}\ln\left(\sum\limits_{k=1}^{n}|x_k|^p\right) + \frac{1}{p}\cdot\frac{\sum\limits_{k=1}^{n}\ln(|x_k|)\cdot|x_k|^p}{\sum\limits_{k=1}^{n}|x_k|^p}.
$$
Так как $\ln y$~--- вогнутая по $y$ функция, то по неравенству Йенсена получаем
\begin{equation*}
    \begin{array}{cl}
        \frac{dg_x(p)}{dp}  &\leqslant \frac{1}{p}\ln\left(\sum\limits_{k=1}^{n}|x_k|^p\right)^{-\frac{1}{p}} + \frac{1}{p}\ln\left(\sum\limits_{k=1}^{n}|x_k|\cdot\frac{|x_k|^p}{\sum\limits_{k=1}^{n}|x_k|^p}\right)\\
        &= \frac{1}{p}\ln\left(\frac{\sum\limits_{k=1}^{n}|x_k|^{p+1}}{\left(\sum\limits_{k=1}^{n}|x_k|^{p}\right)^{\frac{p+1}{p}}}\right) \leqslant \frac{1}{p}\ln\left(\frac{\sum\limits_{k=1}^{n}|x_k|^{p+1}}{\sum\limits_{k=1}^{n}\left(|x_k|^{p}\right)^{\frac{p+1}{p}}}\right)\\
        & = 0,
    \end{array}
\end{equation*}
то есть функция $g_x(p)$~--- невозрастающая функция на $[1,+\infty)$. Лемма доказана.
	\end{proof}
	
	Имеет место следующая теорема, являющаяся следствием явления 
	концентрации равномерной меры на сфере вокруг экватора (см. также~\cite{BD}; северный полюс задается градиентом $\nabla f\left( x \right)$).
	
	\begin{theorem}\label{main_theorem}
		Пусть $e \in RS_2^n(1),\, n \geqslant8,\, s\in\RR^n$, тогда
		\begin{equation}\label{theorem1:expect_q_norm}
        \E[||e||_q^2] \leqslant \min\{q-1,\,16\ln n - 8\}n^{\frac{2}{q}-1},\quad 2\leqslant q \leqslant \infty
    \end{equation}
    
    \begin{equation}\label{theorem1:expect_inner_product}
        \E[\langle s,\, e\rangle^2||e||_q^2] \leqslant \sqrt{3}||s||_2^2\min\{2q-1,32\ln n -8\}n^{\frac{2}{q}-2},\quad 2\leqslant q \leqslant \infty,
    \end{equation}
    где под знаком $||\cdot||_q$ понимается векторная~$q$-норма (норма Гёльдера с показателем~$q$).
	\end{theorem}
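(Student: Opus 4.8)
The plan is to prove the two inequalities separately, in each case splitting the minimum into its polynomial branch and its logarithmic branch, and to work throughout with the Gaussian representation $e=g/\|g\|_2$, where $g=(g_1,\dots,g_n)$ has i.i.d.\ standard normal entries (so that the direction $g/\|g\|_2$ and the length $\|g\|_2$ are independent). For \eqref{theorem1:expect_q_norm} I would start from $\|e\|_q^2=\left(\sum_k|e_k|^q\right)^{2/q}$ and note that, since $2/q\leqslant1$, the map $u\mapsto u^{2/q}$ is concave; Jensen's inequality together with the exchangeability of the coordinates of $e$ then gives $\E\|e\|_q^2\leqslant(n\,\E|e_1|^q)^{2/q}$. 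By the independence of direction and length, $\E|e_1|^q=\E|g_1|^q/\E\|g\|_2^q$, and I would combine the classical Gaussian moment bound $\E|g_1|^q\leqslant(q-1)^{q/2}$ with the Jensen lower bound $\E\|g\|_2^q\geqslant(\E\|g\|_2^2)^{q/2}=n^{q/2}$ (convexity of $u\mapsto u^{q/2}$). This yields $\E|e_1|^q\leqslant(q-1)^{q/2}n^{-q/2}$ and hence the branch $\E\|e\|_q^2\leqslant(q-1)n^{2/q-1}$.

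For the logarithmic branch of \eqref{theorem1:expect_q_norm} I would use the elementary $\|e\|_q\leqslant n^{1/q}\|e\|_\infty$ followed by the preceding Lemma, $\|e\|_\infty\leqslant\|e\|_{q^*}$, at the choice $q^*=2\ln n$ (which is $\geqslant2$ for $n\geqslant8$). Since $n^{2/q^*}=e$, the polynomial branch applied at $q^*$ gives $\E\|e\|_\infty^2\leqslant(2\ln n-1)e/n$, so that $\E\|e\|_q^2\leqslant n^{2/q}\E\|e\|_\infty^2\leqslant(2\ln n-1)e\,n^{2/q-1}\leqslant(16\ln n-8)n^{2/q-1}$, the last step holding with room to spare for $n\geqslant8$ (the constant $16$ is deliberately loose). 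Turning to \eqref{theorem1:expect_inner_product}, I would reduce to $\|s\|_2=1$ by homogeneity and then apply Cauchy--Schwarz, $\E[\langle s,e\rangle^2\|e\|_q^2]\leqslant\sqrt{\E\langle s,e\rangle^4}\,\sqrt{\E\|e\|_q^4}$. One cannot simply rotate $s$ onto a coordinate axis here, because $\|\cdot\|_q$ is not rotation-invariant; but the \emph{scalar} $\langle s,e\rangle$ alone is, having the law of $e_1$, and $\E e_1^4=3/(n(n+2))\leqslant3/n^2$. This produces exactly the factor $\sqrt3$ and reduces the whole claim to the fourth-moment estimate $\E\|e\|_q^4\leqslant\bigl(\min\{2q-1,\,32\ln n-8\}\bigr)^2 n^{4/q-2}$.

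I would prove this fourth-moment estimate by mirroring the first part. For $q\geqslant4$ the exponent $4/q\leqslant1$ again makes Jensen available, giving $\E\|e\|_q^4\leqslant(n\E|e_1|^q)^{4/q}\leqslant(q-1)^2n^{4/q-2}\leqslant(2q-1)^2n^{4/q-2}$. The logarithmic branch is obtained exactly as before: $\|e\|_q\leqslant n^{1/q}\|e\|_\infty\leqslant n^{1/q}\|e\|_{q^*}$ with $q^*=2\ln n\geqslant4$ (valid since $n\geqslant8$), and the $q\geqslant4$ case at $q^*$ gives $\E\|e\|_{q^*}^4\leqslant(2\ln n-1)^2e^2/n^2$, whence $\E\|e\|_q^4\leqslant(2\ln n-1)^2e^2\,n^{4/q-2}\leqslant(32\ln n-8)^2n^{4/q-2}$. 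The range where Jensen fails, and which I expect to be the main obstacle, is $2\leqslant q<4$, since then $4/q>1$ and $u\mapsto u^{4/q}$ is convex. Here I would instead interpolate: with $\lambda=2-4/q\in[0,1)$, the log-convexity of $L^p$-norms gives $\|e\|_q\leqslant\|e\|_2^{1-\lambda}\|e\|_4^{\lambda}=\|e\|_4^{\lambda}$ because $\|e\|_2=1$, so that $\E\|e\|_q^4\leqslant\E(\|e\|_4^4)^{\lambda}\leqslant(\E\|e\|_4^4)^{\lambda}$ by Jensen applied now to the \emph{concave} $u\mapsto u^{\lambda}$. Using the already-established case $\E\|e\|_4^4\leqslant 9/n$, this yields $\E\|e\|_q^4\leqslant 9^{\lambda}n^{-\lambda}=9^{2-4/q}n^{4/q-2}\leqslant(2q-1)^2n^{4/q-2}$, since $9^{2-4/q}\leqslant9\leqslant(2q-1)^2$ on $[2,4)$. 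Combining the two branches of the fourth-moment bound with the Cauchy--Schwarz reduction and restoring $\|s\|_2^2$ then gives \eqref{theorem1:expect_inner_product}.
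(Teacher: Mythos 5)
Your proof is correct, and while its skeleton coincides with the paper's (Jensen to reduce $\E\|e\|_q^2$ to $n\,\E|e_1|^q$, Cauchy--Schwarz plus fourth moments for the cross term, and evaluation near $q\approx 2\ln n$ for the logarithmic branch), the two load-bearing steps are done differently. For the key moment bound $\E|e_1|^q\leqslant\bigl(\tfrac{q-1}{n}\bigr)^{q/2}$ the paper computes $\E|e_1|^q=\tfrac{1}{\sqrt{\pi}}\Gamma(\tfrac n2)\Gamma(\tfrac{q+1}{2})/\Gamma(\tfrac{q+n}{2})$ exactly by spherical coordinates and then proves the resulting Gamma-function inequality via a monotonicity argument with the digamma function; you instead factorize $\E|e_1|^q=\E|g_1|^q/\E\|g\|_2^q$ using the independence of direction and radius, bound the denominator below by Jensen and cite the classical Gaussian bound $\E|g_1|^q\leqslant(q-1)^{q/2}$. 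This is cleaner and avoids the coordinate computation entirely, but note that your two estimates multiply out to exactly the paper's inequality, so the analytic weight has been shifted into the cited fact; a self-contained version would still require proving $2^{q/2}\Gamma(\tfrac{q+1}{2})/\sqrt{\pi}\leqslant(q-1)^{q/2}$ for real $q\geqslant2$, which is the $n$-free core of the paper's digamma argument. For the fourth moment the paper gets $\E\|e\|_q^4\leqslant(2q-1)^2n^{4/q-2}$ uniformly on $q\geqslant2$ in one stroke via $\bigl(\sum_k a_k\bigr)^2\leqslant n\sum_k a_k^2$ and the key estimate at exponent $2q$; your case split at $q=4$ with interpolation through $\|e\|_2=1$ on $[2,4)$ is also valid (and even gives the better constant $(q-1)^2$ for $q\geqslant4$), but is somewhat longer. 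Your choice of the explicit point $q^*=2\ln n$ in place of the paper's exact optimizer $q_0$ changes nothing essential ($e\leqslant 8$ in place of $e^2\leqslant 8$). The only items you assert without derivation --- the Gaussian moment bound and $\E e_1^4=3/(n(n+2))$ --- are both standard and both recoverable inside your own Gaussian-factorization framework, so I see no genuine gap.
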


	\begin{proof}
Докажем вспомогательное неравенство:
\begin{equation}\label{theorem1:rough_estimation_of_expectation_q_norm}
    \E[||e||_q^2] \leqslant (q-1)n^{\frac{2}{q}-1},\quad 2\leqslant q < \infty.
\end{equation}
Во-первых,
\begin{equation}\label{theorem1:jensen}
    \begin{array}{rl}
        \E[||e||_q^2] = \E\left[\left(\sum\limits_{k=1}^{n}|e_k|^q\right)^{\frac{2}{q}} \right] \overset{\circledOne}{\leqslant} \left(\E\left[\sum\limits_{k=1}^{n}|e_k|^q\right]\right)^{\frac{2}{q}} \overset{\circledTwo}{=} \left(n\E[|e_2|^q]\right)^{\frac{2}{q}},
    \end{array}
\end{equation}
где $\circledOne$ выполнено в силу вероятностного неравенства Йенсена (функция $\varphi(x) = x^{\frac{2}{q}}$ является вогнутой, так как $q\geqslant2$), а переход $\circledTwo$ корректен в силу линейности математического ожидания и одинаковой распределённости компонент вектора~$e$.

Во-вторых, воспользуемся тем фактом (лемма Пуанкаре, см., например, \cite[п.~6.3]{Poincare}), что
\begin{equation}
\label{eq_lem_puank}
    e \overset{d}{=} \frac{\xi}{\sqrt{\xi_1^2 + \dots + \xi_n^2}},
\end{equation}
где $\xi = (\xi_1,\xi_2,\ldots,\xi_n)^\top$~--- $n$-мерный гауссовский случайный вектор с нулевым математическим ожиданием и единичной ковариационной матрицей, а $\overset{d}{=}$ обозначает равенство по распределению. 
Тогда
\begin{equation*}
    \begin{array}{cl}
        \E[|e_2|^q] &= \E\left[\frac{|\xi_2|^q}{\left(\xi_1^2+\ldots+\xi_n^2\right)^{\frac{q}{2}}}\right]\\
        &= \idotsint\limits_{\RR^n}|x_2|^q\left(\sum\limits_{k=1}^{n}x_k^2\right)^{-\frac{q}{2}}\cdot\frac{1}{(2\pi)^{\frac{n}{2}}}\cdot \exp\left(-\frac{1}{2}\sum\limits_{k=1}^{n}x_k^2\right)dx_1\ldots dx_n.
    \end{array}
\end{equation*}
Перейдём к сферическим координатам:
\begin{equation*}
    \begin{array}{cl}
        x_1 &= r\cos\varphi\sin\theta_1\ldots\sin\theta_{n-2},\\
        x_2 &= r\sin\varphi\sin\theta_1\ldots\sin\theta_{n-2},\\
        x_3 &= r\cos\theta_1\sin\theta_2\ldots\sin\theta_{n-2},\\
        x_4 &= r\cos\theta_2\sin\theta_3\ldots\sin\theta_{n-2},\\
        \ldots\\
        x_n &= r\cos\theta_{n-2},\\
        &r>0,\,\varphi \in [0,2\pi),\, \theta_i \in [0,\pi],\, i = \overline{1,n-2}, 
    \end{array}
\end{equation*}
якобиан преобразования координат равен
\begin{equation*}
    \det\left(\frac{\partial(x_1,\ldots,x_n)}{\partial(r,\varphi,\theta_1,\theta_2,\ldots,\theta_{n-2})}\right) = r^{n-1}\sin\theta_1(\sin\theta_2)^2\ldots(\sin\theta_{n-2})^{n-2}.
\end{equation*}
Тогда математическое ожидание $\E[|e_2|^q]$ можно записать в виде:
\begin{equation*}
    \begin{array}{cl}
        \E[|e_2|^q] &= \idotsint\limits_{\substack{r>0,\,\varphi \in [0, \, 2\pi),\\ 
        \theta_i \in [0,\pi],\, i = \overline{1,n-2}}}r^{n-1}|\sin\varphi|^q|\sin\theta_1|^{q+1}|\sin\theta_2|^{q+2}\ldots|\sin\theta_{n-2}|^{q+n-2} \\
        &\cdot\;\frac{e^{-\frac{r^2}{2}}}{(2\pi)^{\frac{n}{2}}}dr\ldots d\theta_{n-2}\\
        &=\frac{1}{(2\pi)^{\frac{n}{2}}} I_r\cdot I_\varphi \cdot I_{\theta_1}\cdot I_{\theta_2}\cdot\ldots\cdot I_{\theta_{n-2}},
    \end{array}
\end{equation*}
где
\begin{equation*}
    \begin{array}{cl}
        I_r &= \int\limits_{0}^{+\infty}r^{n-1}e^{-\frac{r^2}{2}}dr,\\
        I_\varphi &= \int\limits_{0}^{2\pi}|\sin\varphi|^qd\varphi = 2\int\limits_{0}^{\pi}|\sin\varphi|^qd\varphi,\\
        I_{\theta_i} &= \int\limits_{0}^{\pi}|\sin\theta_i|^{q+i}d\theta_i,\, i=\overline{1, \, n-2}.
    \end{array}
\end{equation*}
Вычислим эти интегралы. Начнём с $I_r$:
\begin{equation*}
    \begin{array}{rl}
        I_r = \int\limits_{0}^{+\infty}r^{n-1}e^{-\frac{r^2}{2}}dr = /\text{замена}\;r = \sqrt{2t}/ = \int\limits_{0}^{+\infty}(2t)^{\frac{n}{2}-1}e^{-t}dt = 2^{\frac{n}{2}-1}\Gamma(\frac{n}{2}).
    \end{array}
\end{equation*}


Чтобы вычислить остальные интегралы, будет полезно рассмотреть следующий интеграл $(\alpha > 0)$:
\begin{equation*}
    \begin{array}{rl}
        \int\limits_{0}^{\pi} |\sin\varphi|^\alpha d\varphi &= 2\int\limits_{0}^{\frac{\pi}{2}}|\sin\varphi|^\alpha d\varphi = 2\int\limits_{0}^{\frac{\pi}{2}}(\sin^2\varphi)^{\frac{\alpha}{2}}d\varphi = /\text{замена}\; t = \sin^2\varphi /\\ 
        &= \int\limits_{0}^{1}t^{\frac{\alpha-1}{2}}(1-t)^{-\frac{1}{2}}dt = B(\frac{\alpha+1}{2},\,\frac{1}{2}) = \frac{\Gamma(\frac{\alpha+1}{2})\Gamma(\frac{1}{2})}{\Gamma(\frac{\alpha+2}{2})} = \sqrt{\pi} \frac{\Gamma(\frac{\alpha+1}{2})}{\Gamma(\frac{\alpha+2}{2})}.
    \end{array}
\end{equation*}
Отсюда получаем, что
\begin{equation}\label{theorem1:expectation_component}
    \begin{array}{cl}
        \E[|e_2|^q] &= \frac{1}{(2\pi)^{\frac{n}{2}}} I_r\cdot I_\varphi \cdot I_{\theta_1}\cdot I_{\theta_2}\cdot\ldots\cdot I_{\theta_{n-2}}\\
        &= \frac{2^{\frac{n}{2}-1}}{(2\pi)^{\frac{n}{2}}}\cdot\Gamma(\frac{n}{2})\cdot 2\frac{\sqrt{\pi}\Gamma(\frac{q+1}{2})}{\Gamma(\frac{q+2}{2})}\cdot\frac{\sqrt{\pi}\Gamma(\frac{q+2}{2})}{\Gamma(\frac{q+3}{2})}\cdot\frac{\sqrt{\pi}\Gamma(\frac{q+3}{2})}{\Gamma(\frac{q+4}{2})}\cdot\ldots\cdot\frac{\sqrt{\pi}\Gamma(\frac{q+n-1}{2})}{\Gamma(\frac{q+n}{2})}\\
        &= \frac{1}{\sqrt{\pi}}\cdot\frac{\Gamma(\frac{n}{2})\Gamma(\frac{q+1}{2})}{\Gamma(\frac{q+n}{2})}.
    \end{array}
\end{equation}
Покажем, что $\forall\, q\geqslant 2$
\begin{equation}\label{theorem1:key_estimation}
    \frac{1}{\sqrt{\pi}}\cdot\frac{\Gamma(\frac{n}{2})\Gamma(\frac{q+1}{2})}{\Gamma(\frac{q+n}{2})} \leqslant \left(\frac{q-1}{n}\right)^{\frac{q}{2}}.
\end{equation}
Сначала убедимся, что неравенство \eqref{theorem1:key_estimation} выполнено для $q=2$ (и произвольного $n$):
\begin{equation*}
    \frac{1}{\sqrt{\pi}}\cdot\frac{\Gamma(\frac{n}{2})\Gamma(\frac{2+1}{2})}{\Gamma(\frac{2+n}{2})} - \frac{1}{n} = \frac{1}{\sqrt{\pi}} \cdot \frac{\Gamma(\frac{n}{2})\cdot\frac{1}{2}\Gamma(\frac{1}{2})}{\frac{n}{2}\Gamma(\frac{n}{2})} - \frac{1}{n} = \frac{1}{n}-\frac{1}{n} = 0 \leqslant 0.
\end{equation*}
Рассмотрим функцию (вообще говоря, двух аргументов)
\begin{equation*}
    f_n(q) = \frac{1}{\sqrt{\pi}}\cdot\frac{\Gamma(\frac{n}{2})\Gamma(\frac{q+1}{2})}{\Gamma(\frac{q+n}{2})} - \left(\frac{q-1}{n}\right)^{\frac{q}{2}}
\end{equation*}
при $q \geqslant 2$. Также введём в рассмотрение функцию $\psi(x) = \frac{d(\ln(\Gamma(x)))}{dx}$ при $x > 0$ (\textit{дигамма-функция}). Для гамма-функции выполняется тождество
\begin{equation*}
    \Gamma(x+1) = x\Gamma(x),\, x>0.
\end{equation*}
Возьмём от этого тождества логарифм и продифференцируем по $x$:
\begin{equation*}
    \begin{array}{rl}
        \ln\Gamma(x+1) = \ln\Gamma(x) + \ln x,\\
        \frac{d(\ln(\Gamma(x+1)))}{dx} = \frac{d(\ln(\Gamma(x)))}{dx} + \frac{1}{x},
    \end{array}
\end{equation*}
что можно записать через дигамма-функцию:
\begin{equation}\label{theorem1:digamma_recurrence}
    \psi(x+1) = \psi(x) + \frac{1}{x}.
\end{equation}
Покажем, что дигамма-функция возрастает при $x>0$. Для этого докажем неравенство:
\begin{equation}
\label{theorem1:digamma_decrease}
    \left(\Gamma'(x)\right)^2 < \Gamma(x)\Gamma''(x).
\end{equation}
Действительно,
\begin{equation*}
    \begin{array}{cl}
        \left(\Gamma'(x)\right)^2 &= \left(\int\limits_0^{+\infty}e^{-t}\ln t\cdot t^{x-1}dt\right)^2\\
        &\overset{\circledOne}{<} \int\limits_0^{+\infty}\left(e^{-\frac{t}{2}}t^{\frac{x-1}{2}}\right)^2dt\cdot\int\limits_0^{+\infty}\left(e^{-\frac{t}{2}}t^{\frac{x-1}{2}}\ln t\right)^2dt\\
        &= \underbrace{\int\limits_0^{+\infty}e^{-t}t^{x-1}dt}_{\Gamma(x)}\cdot\underbrace{\int\limits_{0}^{+\infty}e^tt^{x-1}\ln^2 tdt}_{\Gamma''(x)},
    \end{array}
\end{equation*}
где $\circledOne$ следует из неравенства Коши-Буняковского (причём неравенство строгое, ибо функции $e^{-\frac{t}{2}}t^{\frac{x-1}{2}}$ и $e^{-\frac{t}{2}}t^{\frac{x-1}{2}}\ln t$ линейно независимы). Из неравенства \eqref{theorem1:digamma_decrease} следует, что
\begin{equation*}
    \frac{d^2(\ln\Gamma(x))}{dx^2} = \left(\frac{\Gamma'(x)}{\Gamma(x)}\right)' = \frac{\Gamma''(x)}{\Gamma(x)} - \frac{\left(\Gamma'(x)\right)^2}{\left(\Gamma(x)\right)^2} \overset{\eqref{theorem1:digamma_decrease}}{>} 0, 
\end{equation*}
то есть дигамма-функция возрастает.

Теперь покажем, что $f_n(q)$ убывает на отрезке $[2,+\infty)$. Для этого достаточно рассмотреть $\ln(f(q))$:
\begin{equation*}
    \begin{array}{cl}
        \ln(f_n(q)) &= \ln\left(\frac{\Gamma(\frac{n}{2})}{\sqrt{\pi}}\right) + \ln\left(\Gamma\left(\frac{q+1}{2}\right)\right) - \ln\left(\Gamma\left(\frac{q+n}{2}\right)\right) - \frac{q}{2}\left(\ln(q-1)-\ln n\right),\\
        \frac{d(\ln(f_n(q)))}{dq} &= \frac{1}{2}\psi\left(\frac{q+1}{2}\right)-\frac{1}{2}\psi\left(\frac{q+n}{2}\right)-\frac{1}{2}\ln(q-1)-\frac{q}{2(q-1)} + \frac{1}{2}\ln n.
    \end{array}
\end{equation*}
Покажем, что $\frac{d(\ln(f_n(q)))}{dq} < 0$ при $q\geqslant 2$. Пусть $k = \lfloor\frac{n}{2}\rfloor$ (ближайшее целое число, не превосходящее $\frac{n}{2}$). Тогда $\psi\left(\frac{q+n}{2}\right) > \psi\left(k-1+\frac{q+1}{2}\right)$ и $\ln n \leqslant \ln(2k+1)$, откуда следует, что
\begin{equation*}
    \begin{array}{cl}
        \frac{d(\ln(f_n(q)))}{dq} &< \frac{1}{2}\left(\psi\left(\frac{q+1}{2}\right)-\psi\left(k-1+\frac{q+1}{2}\right)\right)-\frac{1}{2}\ln(q-1)-\frac{q}{2(q-1)} + \frac{1}{2}\ln(2k+1)\\
        &\overset{\eqref{theorem1:digamma_recurrence}}{=}\frac{1}{2} \left(\psi\left(\frac{q+1}{2}\right) - \sum\limits_{i=1}^{k-1}\frac{1}{\frac{q+1}{2}+ k - i - 1} - \psi\left(\frac{q+1}{2}\right)\right) - \frac{q}{2(q-1)} + \frac{1}{2}\ln\left(\frac{2k+1}{q-1}\right)\\
        &\overset{\circledOne}{\leqslant} -\frac{1}{2}\sum\limits_{i=1}^{k-1}\frac{2}{q-1+2k-2i} - \frac{1}{q-1} + \frac{1}{2}\ln\left(\frac{2k+1}{q-1}\right)\\
        &= -\frac{1}{2}\left(\frac{2}{q-1} + \frac{2}{q+1} + \frac{2}{q+3}+ \ldots + \frac{2}{q+2k-3}\right) + \frac{1}{2}\ln\left(\frac{2k+1}{q-1}\right)\\
        &\overset{\circledTwo}{<} -\frac{1}{2}\ln\left(\frac{q+2k-1}{q-1}\right)+\frac{1}{2}\ln\left(\frac{2k+1}{q-1}\right) \overset{\circledThree}{\leqslant}-\frac{1}{2}\ln\left(\frac{2k+1}{q-1}\right)+\frac{1}{2}\ln\left(\frac{2k+1}{q-1}\right) = 0,
    \end{array}
\end{equation*}
где $\circledOne$ и $\circledThree$ выполнены в силу неравенства $q\geqslant2$, $\circledTwo$ следует из оценки сверху интеграла от функции $\frac{1}{x}$ интегралом от её верхней ступенчатой мажоранты $g(x) = \frac{1}{q-1+2i},\, x\in[q-1+2i,q-1+2i+2],\, i=\overline{0,2k-1}$: 
\begin{equation*}
    \frac{2}{q-1} + \frac{2}{q+1} + \frac{2}{q+3} + \ldots + \frac{2}{q+2k-3} > \int\limits_{q-1}^{q+2k-1}\frac{1}{x}dx = \ln\left(\frac{q+2k-1}{q-1}\right).
\end{equation*}

Итак, мы показали, что $\frac{d(\ln(f_n(q)))}{dq} < 0$ для $q\geqslant2$ и произвольного натурального $n$. Следовательно, для любого фиксированного $n$ функция $f_n(q)$ убывает по $q$, а значит, $f_n(q)\leqslant f_n(2) = 0$, то есть справедливо неравенство \eqref{theorem1:key_estimation}. Отсюда и из~\eqref{theorem1:jensen}, \eqref{theorem1:expectation_component} получаем, что
для любого
$q\geqslant2$
\begin{equation}\label{theorem1:pre-final}
    \E[||e||_q^2] \overset{\eqref{theorem1:jensen}}{\leqslant} \left(n\E[|e_2|^q]\right)^{\frac{2}{q}} \overset{\eqref{theorem1:expectation_component}, \, \eqref{theorem1:key_estimation}}{\leqslant} (q-1)n^{\frac{2}{q}-1}.
\end{equation}
Неравенство \eqref{theorem1:pre-final} нет смысла использовать при больших $q$ (относительно $n$). Рассмотрим правую часть неравенства \eqref{theorem1:pre-final} как функцию $q$ и найдём её минимум при $q\geqslant2$. Рассмотрим $h_n(q) = \ln(q-1) + \left(\frac{2}{q}-1\right)\ln n$ (логарифм правой части \eqref{theorem1:pre-final}). Производная $h(q)$:
\begin{equation*}
    \begin{array}{rl}
        \frac{dh(q)}{dq} &= \frac{1}{q-1} -\frac{2\ln n}{q^2},\\
        \frac{1}{q-1} -\frac{2\ln n}{q^2} &= 0,\\
        q^2-2q\ln n + 2\ln n &= 0.
    \end{array}
\end{equation*}
Если $n \geqslant 8$, то точка минимума на множестве $[2,+\infty)$ есть 
$$
q_0 = \ln n \left( 1+\sqrt{1-\frac{2}{\ln n}} \right )
$$ 
(в случае $n\leqslant 7$ оказывается, что $q_0 = 2$; везде далее считаем, что $n\geqslant8$). Поэтому для всех $q>q_0$ более точная оценка будет следующей:
\begin{equation}\label{theorem1:pre_final_big_q}
    \begin{array}{rl}
        \E[||e||_q^2] &\overset{\circledOne}{<} \E[||e||_{q_0}^2] \overset{\eqref{theorem1:pre-final}}{\leqslant}(q_0-1)n^{\frac{2}{q_0}-1}\overset{\circledTwo}{\leqslant}(2\ln n -1)n^{\frac{2}{\ln n}-1}\\
        &= (2\ln n -1)e^2\frac{1}{n}\leqslant(16\ln n -8)\frac{1}{n}\leqslant(16\ln n -8)n^{\frac{2}{q}-1},
    \end{array}
\end{equation}
где $\circledOne$ верно в силу Леммы~1, $\circledTwo$ следует из неравенств $q_0 \leqslant 2\ln n,\, q_0 \geqslant \ln n$. Объединяя оценки \eqref{theorem1:pre-final} и \eqref{theorem1:pre_final_big_q}, получаем неравенство \eqref{theorem1:expect_q_norm}. 

Теперь перейдём к доказательству неравенства \eqref{theorem1:expect_inner_product}. Во-первых, получим оценку для $\sqrt{\E[||e||_q^4]}$. В силу вероятностного неравенства Йенсена ($q \geqslant 2$)
\begin{equation*}
    \begin{array}{rl}
        \E[||e||_q^4] &= \E\left[\left(\left(\sum\limits_{k=1}^{n}|e_k|^q\right)^2\right)^{\frac{2}{q}}\right] \leqslant \left(\E\left[\left(\sum\limits_{k=1}^{n}|e_k|^q\right)^2\right]\right)^{\frac{2}{q}}\\
        &\overset{\circledOne}{\leqslant} \left(\E\left[\left(n\sum\limits_{k=1}^{n}|e_k|^{2q}\right)\right]\right)^{\frac{2}{q}} \overset{\circledTwo}{=} \left(n^2\E[|e_2|^{2q}]\right)^{\frac{2}{q}}\\
        &\overset{\eqref{theorem1:expectation_component},\eqref{theorem1:key_estimation}}{\leqslant} n^{\frac{4}{q}}\left(\left(\frac{2q-1}{n}\right)^{\frac{2q}{2}}\right)^{\frac{2}{q}} = (2q-1)^{2}n^{\frac{4}{q}-2},
    \end{array}
\end{equation*}
где $\circledOne$ следует из неравенства $\left(\sum\limits_{k=1}^{n} x_k\right)^2 \leqslant n\sum\limits_{k=1}^{n} x_k^2$ для $x_1,x_2,\ldots,x_n\in\RR$, а $\circledTwo$ есть следствие линейности математического ожидания и одинаковой распределённости компонент вектора $e$. Отсюда получаем оценку
\begin{equation}\label{theorem1:pre_final_q_norm_power4}
    \sqrt{\E[||e||_q^4]} \leqslant (2q-1)n^{\frac{2}{q}-1}.
\end{equation}
Рассмотрим правую часть неравенства \eqref{theorem1:pre_final_q_norm_power4} как функцию $q$ и найдём её минимум при $q\geqslant2$. Рассмотрим $h_n(q) = \ln(2q-1) + \left(\frac{2}{q}-1\right)\ln n$ (логарифм правой части \eqref{theorem1:pre_final_q_norm_power4}). Производная $h(q)$:
\begin{equation*}
    \begin{array}{rl}
        \frac{dh(q)}{dq} &= \frac{2}{2q-1} -\frac{2\ln n}{q^2},\\
        \frac{2}{2q-1} -\frac{2\ln n}{q^2} &= 0,\\
        q^2-2q\ln n + \ln n &= 0.
    \end{array}
\end{equation*}
Если $n \geqslant 3$, то точка минимума на множестве $[2,+\infty)$ есть 
$$
q_0 = \ln n\left(1+\sqrt{1-\frac{1}{\ln n}}\right)
$$
(в случае $n \leqslant 2$ оказывается, что $q_0 = 2$; везде далее считаем, что $n \geqslant 3$). Поэтому для всех $q>q_0$ более точная оценка будет следующей:
\begin{equation}\label{theorem1:pre_final_big_q_power4}
    \begin{array}{rl}
        \sqrt{\E[||e||_q^4]} &\overset{\circledOne}{<} \sqrt{\E[||e||_{q_0}^4]} \overset{\eqref{theorem1:pre_final_q_norm_power4}}{\leqslant}(2q_0-1) n^{\frac{2}{q_0}-1}\overset{\circledTwo}{\leqslant}(4\ln n -1)n^{\frac{2}{\ln n}-1}\\
        &= (4\ln n -1)e^2\frac{1}{n}\leqslant(32\ln n -8)\frac{1}{n}\leqslant(32\ln n -8)n^{\frac{2}{q}-1},
    \end{array}
\end{equation}
где $\circledOne$ верно в силу неравенства $||e||_q < ||e||_{q_0}$ для $q > q_0$, $\circledTwo$ следует из неравенств $q_0 \leqslant 2\ln n,\, q_0 \geqslant \ln n$. Объединяя оценки \eqref{theorem1:pre_final_q_norm_power4} и \eqref{theorem1:pre_final_big_q_power4}, получаем неравенство
\begin{equation}\label{theorem1:expect_q_norm_power4}
    \sqrt{\E[||e||_q^4]}\leqslant \min\{2q-1,32\ln n -8\}n^{\frac{2}{q}-1}.
\end{equation}

Теперь найдём $\E[\langle s,\,e\rangle^4]$, где $s\in\RR^n$~--- некоторый вектор. Пусть $S_n(r)$~--- площадь поверхности $n$-мерной евклидовой сферы радиуса $n$, $d\sigma(e)$~--- ненормированная равномерная мера на $n$-мерной евклидовой сфере. В данных обозначениях $S_n(r) = S_n(1)r^{n-1},\, \frac{S_{n-1}(1)}{S_n(1)} = \frac{n-1}{n\sqrt{\pi}}\frac{\Gamma(\frac{n+2}{2})}{\Gamma(\frac{n+1}{2})}$. Кроме того, пусть $\varphi$~--- угол между $s$ и $e$.
Тогда
\begin{equation}\label{theorem1:inner_product_power4}
    \begin{array}{rl}
        \E[\langle s,\, e\rangle^4] &= \frac{1}{S_n(1)}\int\limits_{S}\langle s,\, e\rangle^4d\sigma(\varphi) = \frac{1}{S_n(1)}\int\limits_0^\pi||s||_2^4\cos^3\varphi S_{n-1}(\sin\varphi)d\varphi\\
        &= ||s||_2^4\frac{S_{n-1}(1)}{S_n(1)}\int\limits_{0}^\pi\cos^4\varphi\sin^{n-2}\varphi d\varphi\\
        &= ||s||_2^4\cdot\frac{n-1}{n\sqrt{\pi}}\frac{\Gamma(\frac{n+2}{2})}{\Gamma(\frac{n+1}{2})}\int\limits_{0}^\pi\cos^4\varphi\sin^{n-2}\varphi d\varphi.
    \end{array}
\end{equation}
Отдельно вычислим интеграл:
\begin{equation*}
    \begin{array}{rl}
        \int\limits_0^\pi\cos^4\varphi\sin^{n-2}\varphi d\varphi &= 2\int\limits_0^{\frac{\pi}{2}}\cos^4\varphi\sin^{n-2}\varphi d\varphi = /\text{замена}\; t=\sin^2\varphi/\\
        &= \int\limits_0^{\frac{\pi}{2}}t^{\frac{n-3}{2}}(1-t)^{\frac{3}{2}}dt = B(\frac{n-1}{2},\frac{5}{2}) = \frac{\Gamma(\frac{5}{2})\Gamma(\frac{n-1}{2})}{\Gamma(\frac{n+4}{2})}\\
        &= \frac{\frac{3}{2}\cdot\frac{1}{2}\Gamma(\frac{1}{2})\Gamma(\frac{n-1}{2})}{\frac{n+2}{2}\cdot\Gamma(\frac{n+2}{2})} = \frac{3}{n+2}\cdot\frac{\sqrt{\pi}\Gamma(\frac{n-1}{2})}{2\Gamma(\frac{n+2}{2})}.
    \end{array}
\end{equation*}
Отсюда и из \eqref{theorem1:inner_product_power4} получаем, что
\begin{equation}\label{theorem1:inner_product_power4_final}
    \begin{array}{rl}
        \E[\langle s,\, e\rangle^4] &= ||s||_2^4\cdot\frac{n-1}{n\sqrt{\pi}}\frac{\Gamma(\frac{n+2}{2})}{\Gamma(\frac{n+1}{2})}\cdot\frac{3}{n+2}\cdot\frac{\sqrt{\pi}\Gamma(\frac{n-1}{2})}{2\Gamma(\frac{n+2}{2})}\\
        &= ||s||_2^4\cdot\frac{3(n-1)}{2n(n+2)}\cdot\frac{\Gamma(\frac{n-1}{2})}{\frac{n-1}{2}\Gamma(\frac{n-1}{2})}
        = \frac{3||s||_2^4}{n(n+2)} \overset{\circledOne}{\leqslant} \frac{3||s||_2^4}{n^2}.
    \end{array}
\end{equation}

Чтобы доказать неравенство \eqref{theorem1:expect_inner_product}, осталось воспользоваться \eqref{theorem1:expect_q_norm_power4}, \eqref{theorem1:inner_product_power4_final} и неравенством Коши-Буняковского ($(\E[XY])^2\leqslant\E[X^2]\cdot\E[Y^2]$):
\begin{equation*}
    \begin{array}{rl}
        \E[\langle s,\,e\rangle^2 ||e||_q^2] \overset{\circledOne}{\leqslant} \sqrt{\E[\langle s,\,e\rangle^4]\cdot\E[||e||_q^4]} \leqslant \sqrt{3}||s||_2^2\min\{2q-1,32\ln n -8\}n^{\frac{2}{q}-2}.
    \end{array}
\end{equation*}

Теорема доказана.	
\end{proof}

\section{Вычислительные эксперименты}
Для уточнения констант в верхних оценках теоремы~\ref{main_theorem} были проведены
вычислительные эксперименты.
При генерации случайных
векторов, равномерно распределённых на поверхности $n$-мерной
евклидовой сферы, использовалась лемма Пуанкаре (см.~\eqref{eq_lem_puank}) о том, что 
компоненты любого вектора~$e$, имеющего
такое распределение, можно представлять как отношения 
$\frac{e_k}{
\sqrt{e_1^2 + ... + e_n^2 }}$, где все $e_k$, $k = 1, \, 2, \, \ldots$~--- независимые одинаково распределённые случайные величины,
имеющие стандартное нормальное распределение~$N(0, \, 1)$.

\begin{figure}[ht]
			\center{\includegraphics[width=0.8
				\linewidth]{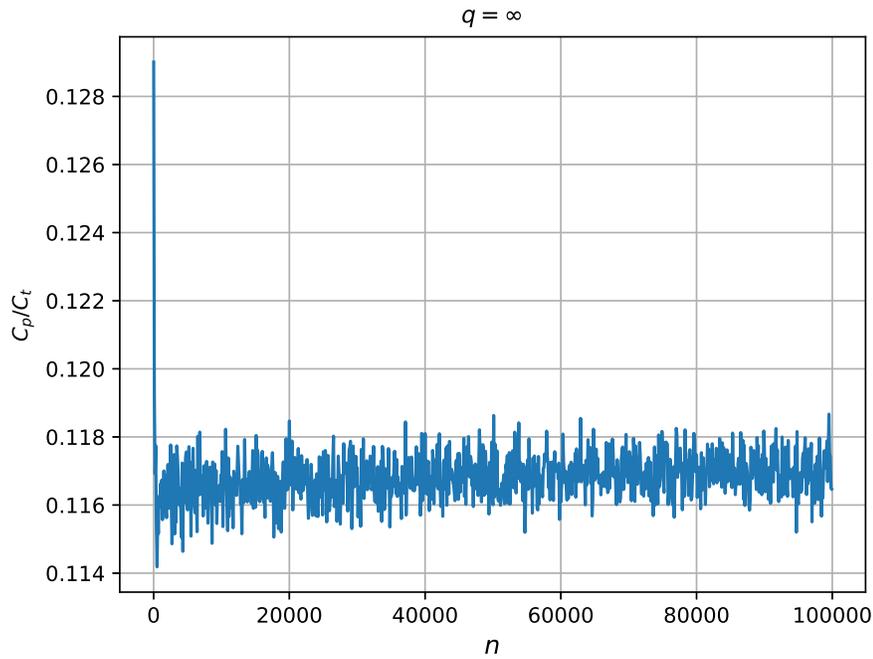}}
			\caption{Уточнение константы в оценке~\eqref{theorem1:expect_q_norm}, $n$~--- размерность пространства, $C_t = 16\ln n - 8$}
			\label{fig_exp1}
		\end{figure}

На рис.~\ref{fig_exp1} приведены результаты эксперимента
по оценке математического ожидания $\infty$-нормы векторов $e \in RS_2^n \left( 1 \right)$. По теореме~\ref{main_theorem}
при $q = \infty$ неравенство~\eqref{theorem1:expect_q_norm}
имеет вид
$$
        \E[||e||_\infty^2] \leqslant C_t n^{-1},
$$ 
где $C_t = 16\ln n - 8$. Эти же константы (назовём их в этом случае~$C_p$)
можно оценить практически, путём вычисления
$\E[||e||_\infty^2]$ методом Монте-Карло.
Это было сделано для $n$ от $10$ до $10^5$,
и на рис.~\ref{fig_exp1} приведено отношение
$C_p/C_t$ для разных $n$. Оказалось,
что отношение с ростом $n$ не меняется,
что значит, что теоретическая оценка 
с точностью до константы верна. 

\begin{figure}[ht]
			\center{\includegraphics[width=0.8
				\linewidth]{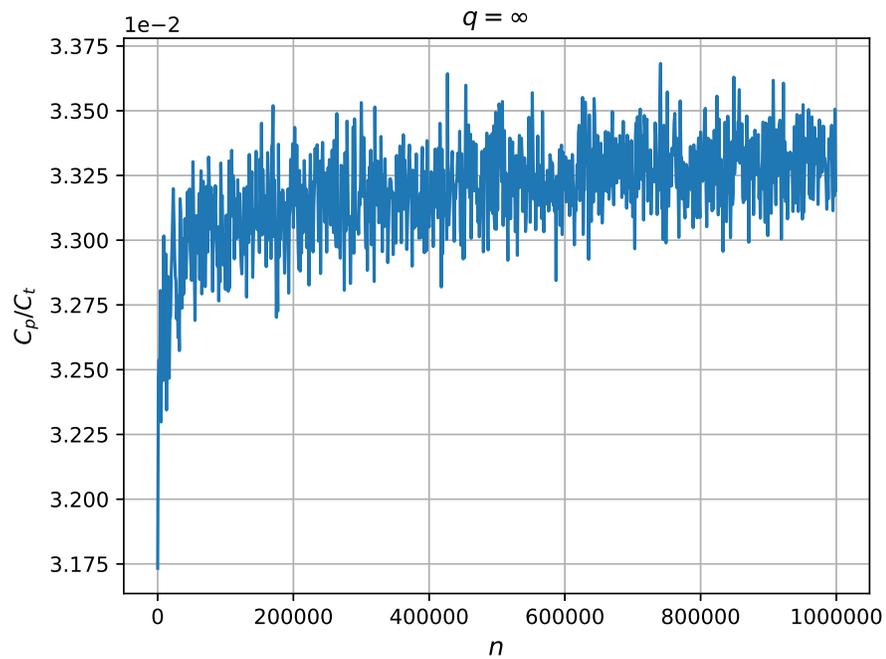}}
			\caption{Уточнение константы в оценке~\eqref{theorem1:expect_inner_product}, $n$~--- размерность пространства, $C_t = \sqrt{3}||s||_2^2\min\{2q-1, \, 32\ln n -8\}$}
			\label{fig_exp2}
		\end{figure}

Такие же эксперименты были проведены и для оценки константы
$$
C_t = \sqrt{3}||s||_2^2\min\{2q-1, \, 32\ln n -8\}
$$
в неравенстве~\eqref{theorem1:expect_inner_product}
при  $q = \infty$:
$$
\E[\langle s,\, e\rangle^2||e||_q^2] \leqslant C_t n^{-2}.
$$
Результаты экспериментов (см. рис.~\ref{fig_exp2}) также подтверждают, что теоретическая оценка $C_t$ 
с точностью до константы верна.

Код на языке Python всех вычислительных экспериментов
выложен на Github \cite{Code}.

\section{Благодарности}
Авторы выражают благодарность Павлу Евгеньевичу Двуреченскому за помощь в работе.


\clearpage

\end{document}